\newtheorem{theorem}{Theorem}[section]
\newtheorem{proposition}[theorem]{Proposition}
\theoremstyle{definition}
\newtheorem{notation}[theorem]{Notation}
\theoremstyle{remark}
\newtheorem{remark}[theorem]{Remark}
\newcommand{\bd}{\partial}
\newcommand{\rn}{\mathbb{R}^n}
\newcommand{\rno}{\mathbb{R}^{n+1}}
\newcommand{\rth}{\mathbb{R}^3}
\newcommand{\cL}{\mathcal{L}}
\title{Uniqueness of closed  self-similar solutions \\  to the Gauss curvature flow}
\author{Kyeongsu Choi}
\address{ {\bf Kyeongsu Choi:} Department of Mathematics, Columbia University, 2990 Broadway, New York, NY 10027, USA.}
\email{kschoi@math.columbia.edu}
\author{Panagiota Daskalopoulos}
\address{ {\bf P. Daskalopoulos:} Department of Mathematics, Columbia University, 2990 Broadway, New York, NY 10027, USA.}
\email{pdaskalo@math.columbia.edu}
\begin{document}

\maketitle

\begin{abstract}
We show the uniqueness of  strictly convex closed smooth  self-similar solutions   to the $\alpha$-Gauss curvature 
flow with $(1/n) < \alpha < 1+(1/n)$. We introduce a Pogorelov type computation, and then we apply the strong maximum principle. Our work combined with earlier works on the Gauss Curvature flow imply  that the $\alpha$-Gauss curvature 
flow with $(1/n) < \alpha < 1+(1/n)$ shrinks  a strictly convex closed smooth hypersurface to a round sphere. 
\end{abstract}

\section{introduction}

We recall that given $\alpha >0$, an one-parameter family of immersions $F:M^n \times [0,T) \to \rno$ is a solution of the $\alpha$-Gauss curvature flow, if for 
each $t \in [0,T)$, $F(M^n,t)=\Sigma_t$ is a complete convex hypersurface embedded in $\rno$, and $F(\cdot,t)$ satisfying
\begin{equation*}
\frac{\bd}{\bd t}  F(p,t)= K^\alpha(p,t) \vec{n}(p,t). 
\end{equation*} 
where $K(p,t)$ and $\vec{n}(p,t)$ are the Gauss curvature and the interior unit vector of $\Sigma_t$ at the point $F(p,t)$, respectively. In particular, if $\alpha=1$ we call  the immersion $F:M^n \times [0,T) \to \rno$ a solution of the Gauss curvature flow.

We consider a closed strictly convex smooth self-similar solution to  the $\alpha$-Gauss curvature flow for $\alpha \in (\frac{1}{n},1+\frac{1}{n})$. Since a closed self-similar solution $\Sigma$ is a shrinking solution, there exists an immersion $F:M^n \to \rno$ such that $F(M^n)=\Sigma$ and the following holds
\begin{align*}\label{eq:INT Shrinker}
K^{\alpha}(p)=-\langle F(p), \vec{n}(p) \rangle. \tag{$*^\alpha$}
\end{align*}

\bigskip In \cite{Fir74GCF} W. Firey introduced the Gauss curvature flow $\alpha=1$   and 
showed  (assuming the existence and regularity of the flow) 
 that a convex closed and centrally symmetric solution 
in $\rth$ contracts to a point and becomes   a round sphere after rescaling.  He also conjectured that
the same result holds true without the symmetry assumption.

\smallskip
In \cite{Tso85GCF} K. Tso 
established the existence and uniqueness of the Gauss curvature flow $\alpha=1$ in $\rno$  
and showed that the flow contracts a closed, smooth and strictly convex hypersurface to a point in finite time.   In \cite{Chow85GCF} 
B. Chow extended Tso's   result  to the $\alpha$-Gauss curvature flow for all $\alpha >0$ in $\rno$.

\smallskip

In \cite{Calabi72Affine} E. Calabi showed that if $\alpha = \frac{1}{n+2}$, closed self-similar solutions are ellipsoids. On the other hand, B. Chow proved in \cite{Chow85GCF} that if $\alpha=\frac{1}{n}$, a strictly convex closed solution converges to a round sphere after  normalizing the enclosed volume, which implies that the strictly convex closed self-similar solution is the unit sphere. 
\smallskip

In  \cite{A99GCF}  B. Andrews proved Firey's conjecture, showing that the Gauss curvature flow 
$\alpha=1$ and $n=2$ contracts a weakly  convex hypersurface in $\rth$ to  
a round sphere. Also, in \cite{AC11aGCF} B. Andrews and X. Chen 
established the same convergence result for  $\alpha \in (\frac{1}{2},1)$ and $n=2$.  The proof 
of B. Andrew's result in \cite{A99GCF}  is based on a beautiful pinching estimate which unfortunately does not generalize in higher dimensions. 

\smallskip 
 
Recently,  P. Guan and L. Ni  \cite{NG13GCFEntropy} obtained the convergence of a centrally symmetric solution of  
the Gauss curvature flow $\alpha=1$ to a sphere 
and  in \cite{ANG15aGCF} they extended  the same result  to $\alpha \geq 1$ jointly with B. Andrews. 
The convergence of the Gauss curvature flow $\alpha=1$ to the sphere without any symmetry assumption in higher dimensions  has remained an open question. 

\smallskip
On the other hand it follows from the works  \cite{A00aGCF, ANG15aGCF, NG13GCFEntropy, KL13aGCF}, 
that  if $\alpha > \frac{1}{n+2}$, then a strictly convex closed solution to the $\alpha$-Gauss curvature flow 
converges to a strictly convex smooth closed self-similar solution after normalizing the enclosed volume.
Thus  the convergence of the $\alpha$-Gauss curvature flow to the sphere for $\alpha > \frac{1}{n+2}$
is reduced to the classification of convex smooth closed self-similar solutions. 

\smallskip
In this work we show that if $\alpha  \in  (\frac{1}{n},1+\frac{1}{n})$ then the only strictly convex smooth and closed self-similar solution
of the $\alpha$-Gauss curvature flow is the round sphere. 

\smallskip 

\begin{theorem}[Uniqueness of closed self-similar solutions]\label{thm:INT Uniqueness}
Given $\alpha\in  (\frac{1}{n},1+\frac{1}{n})$, the unit $n$-sphere is the unique closed strictly convex smooth solution to {\em \eqref{eq:INT Shrinker}}. 
\end{theorem}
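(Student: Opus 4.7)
The plan is to reparametrize $\Sigma$ by its support function $h : S^n \to (0,\infty)$, $h(x) := \sup_{y\in\Sigma}\langle y,x\rangle$, and reduce \eqref{eq:INT Shrinker} to a Monge--Amp\`ere type equation on the round unit $n$-sphere. In a local orthonormal frame on $S^n$ the principal radii of curvature of $\Sigma$ are the eigenvalues of the symmetric tensor $r_{ij} := h_{ij} + h\,\delta_{ij}$, where $h_{ij}$ denotes the covariant Hessian on $S^n$, and $K = 1/\det(r_{ij})$. Since $-\langle F,\vec n\rangle = h$, equation \eqref{eq:INT Shrinker} becomes
\begin{equation*}
\det(r_{ij}) \,=\, h^{-1/\alpha} \qquad \text{on } S^n.
\end{equation*}
The heart of the proof is then to show that $r_{ij}$ is a scalar multiple of $\delta_{ij}$; the passage from such pointwise umbilicity to the unit sphere is elementary.

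To establish umbilicity I would introduce a Pogorelov-type test function built from the largest principal radius $\mu(x) := \lambda_{\max}(r_{ij})(x)$ and $h$, for instance $\Phi := \mu\, h^{q}$ with exponent $q = q(\alpha,n)$ tuned so that $\Phi \equiv 1$ on the round sphere. Let $x_0 \in S^n$ be a maximum point of $\Phi$; by diagonalizing $r_{ij}$ at $x_0$ one may assume $r_{nn}(x_0) = \mu(x_0)$. Differentiating $\log\det r = -\alpha^{-1}\log h$ once and twice, commuting covariant derivatives on $S^n$ via the Ricci identity (which, because the sectional curvature equals $+1$, produces terms of the form $r_{nn} - r_{ii}$), and combining with the extremum conditions $\bar{\nabla}\Phi(x_0) = 0$ and $\bar{\nabla}^{2}\Phi(x_0) \le 0$, one expects an inequality of the shape
\begin{equation*}
W^{ij}\bar{\nabla}_i\bar{\nabla}_j\Phi \,+\, B^k\bar{\nabla}_k\Phi \,+\, c(x)\sum_{i<n}\bigl(r_{nn} - r_{ii}\bigr) \,\le\, 0
\end{equation*}
at $x_0$, with $W^{ij} := (r^{-1})^{ij}$ positive definite and $c(x) > 0$. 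This forces $r_{nn}(x_0) = r_{ii}(x_0)$ for every $i$; the strong maximum principle applied to the linear elliptic operator $L := W^{ij}\bar{\nabla}_i\bar{\nabla}_j + B^k\bar{\nabla}_k$ then propagates the pointwise umbilicity to all of $S^n$, so $r_{ij} = \lambda(x)\delta_{ij}$ globally.

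Given $r_{ij} = \lambda\delta_{ij}$, i.e.\ $h_{ij} = (\lambda - h)\delta_{ij}$, commuting third covariant derivatives on $S^n$ via the Ricci identity forces $\bar{\nabla}_k\lambda = 0$, so $\lambda$ is constant. The Monge--Amp\`ere equation $\lambda^n = h^{-1/\alpha}$ then shows $h$ is also constant, hence $h_{ij} = 0$, $\lambda = h$, and $h^{n+1/\alpha} \equiv 1$; this gives $h\equiv 1$ and $\Sigma = S^n$. The main obstacle is clearly the Pogorelov computation in the second paragraph: the exponent $q$ (and possibly additional lower-order correction terms in $h$ and $|\bar{\nabla}h|^2$) must be chosen so that the many cross terms produced by the Monge--Amp\`ere operator, the factor $h^{-1/\alpha}$, and the positive sphere curvature all align with a single sign. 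I expect this is precisely where the range $\alpha\in(1/n,1+1/n)$ becomes essential: the lower bound makes the $-\alpha^{-1}\log h$ contribution strong enough to dominate certain trace terms coming from the Hessian of $h$, while the upper bound prevents the positive Ricci terms on $S^n$ from overwhelming the principal part of the inequality.
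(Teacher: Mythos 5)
Your proposal is a valid plan only in outline; it shares the Pogorelov philosophy with the paper but runs it in the dual (support function) picture, with a multiplicative test function $\Phi=\mu\,h^{q}$ rather than the paper's additive $w=K^\alpha\lambda_{\min}^{-1}-\tfrac{n\alpha-1}{2n\alpha}|F|^2$. Translating your $\Phi$ into the paper's language, $\mu=\lambda_{\max}(r_{ij})$ is exactly $\lambda_{\min}^{-1}=b^{11}$ at a point where $h_{ij}$ is diagonalized, and $h=K^\alpha$ by the self-shrinker equation. But the paper's Remark~\ref{rmk:INT Pogorelov on power matrix} states explicitly that a Pogorelov computation built on $b^{11}$ only yields the conclusion for $\alpha\in(\tfrac1n,\tfrac12]$; the innovation that extends the range to $(\tfrac1n,1+\tfrac1n)$ --- in particular covering the Gauss curvature flow $\alpha=1$ --- is to replace $b^{11}$ by $\bigl(b^{1i}g_{ij}b^{j1}\bigr)^{1/2}$, the square root of the first entry of $A^{-2}$. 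Your proposal never anticipates this, and I do not believe the tunable exponent $q$ or the lower-order corrections in $|\bar\nabla h|^2$ can substitute for it: the obstruction lives in the comparison of the third-order terms $\sum_i b^{ii}h_{11}|\nabla_i b^{11}|^2$ versus $\sum_{j,p} b^{jj}h_{11}|\nabla_j b^{p1}|^2$ (equations \eqref{eq:Pog 1st reduced eq}--\eqref{eq:Pog 2nd reduced eq}), which depends on the tensorial structure of $b^{ij}$ and not on the scalar factors multiplying it. Without this refinement, I expect your candidate inequality fails for $\alpha>\tfrac12$.

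There is also a secondary gap in the propagation step. The largest eigenvalue $\mu(x)=\lambda_{\max}(r_{ij})(x)$ is only Lipschitz, not $C^2$, exactly at points where eigenvalues coincide --- that is, on the umbilical locus you are trying to reach. So the strong maximum principle cannot be applied directly to $L\Phi$ with $\Phi=\mu h^q$. The paper handles this carefully: it first obtains the umbilicity at the max from the non-smooth function $w$, but then switches to the smooth, eigenvalue-symmetric quantity $f=K^\alpha b^{ij}g_{ij}-\tfrac{n\alpha-1}{2\alpha}|F|^2$ (corresponding to $h\,\mathrm{tr}(r)$ rather than $h\mu$), verifies via Proposition~\ref{prop:Smax Symmetric function} that $f$ and $w$ have the same maximum set, and then proves $\cL f-\langle F,\nabla f\rangle\geq 0$ on an explicit open neighborhood $V$ of the nearly-umbilical set, where the quadratic in $\alpha$ that appears can be checked to be nonnegative. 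Your writeup gestures at the strong maximum principle but does not supply the smooth symmetric surrogate needed to invoke it. The final elementary steps of your argument (from $r_{ij}=\lambda\delta_{ij}$ to $\Sigma=S^n$) are fine.
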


As we discussed above, 
the results in \cite{A00aGCF, ANG15aGCF, NG13GCFEntropy, KL13aGCF} combined with Theorem \ref{thm:INT Uniqueness} imply the convergence of the $\alpha$-Gauss curvature flow to the round sphere, which in particular
proves the higher dimensional  Firey's conjecture.

\begin{theorem} Let $\Sigma_t$ be a strictly convex, closed and smooth solution  to the $\alpha$-Gauss curvature flow with 
$\alpha \in  (\frac{1}{n},1+\frac{1}{n})$, $n \geq 2$. Then, there exists a finite time $T$ at which the surface $\Sigma_t$ converges after rescaling to the round sphere. 

\end{theorem}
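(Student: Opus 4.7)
The plan is to derive this convergence result as a corollary of Theorem \ref{thm:INT Uniqueness} combined with the cited prior works. First, by the short-time existence theory of Tso \cite{Tso85GCF} and Chow \cite{Chow85GCF}, the initial hypersurface $\Sigma_0$ evolves by a unique strictly convex smooth solution of the $\alpha$-Gauss curvature flow defined on a maximal time interval $[0,T)$ with $T<\infty$, and $\Sigma_t$ contracts to a point as $t\to T$.

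Next, I would introduce the standard normalization that keeps the enclosed volume (equivalently, a rescaled version that fixes the extinction point and time) constant, producing a rescaled flow $\tilde\Sigma_\tau$ parametrized for all $\tau\in[0,\infty)$. According to the convergence results of Andrews \cite{A00aGCF}, Andrews--Guan--Ni \cite{ANG15aGCF}, Guan--Ni \cite{NG13GCFEntropy}, and Kim--Lee \cite{KL13aGCF}, for every $\alpha>\tfrac{1}{n+2}$, and in particular throughout the range $\alpha\in(\tfrac1n,1+\tfrac1n)$, the normalized flow $\tilde\Sigma_\tau$ subconverges smoothly along any sequence $\tau_j\to\infty$ to a strictly convex closed smooth self-similar solution $\Sigma_\infty$ satisfying \eqref{eq:INT Shrinker}. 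By Theorem \ref{thm:INT Uniqueness}, the unique such $\Sigma_\infty$ is the round unit sphere (after normalization). Since every subsequential limit coincides, the full rescaled flow $\tilde\Sigma_\tau$ converges smoothly to the round sphere as $\tau\to\infty$, which translates back to the claimed convergence of $\Sigma_t$ after rescaling as $t\to T$.

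The only step requiring care is promoting subsequential to full convergence. This is automatic once the limit is shown to be unique (up to the ambient symmetries that the normalization freezes out, such as translation); the uniqueness of the self-similar limit provided by Theorem \ref{thm:INT Uniqueness} delivers exactly this rigidity. The main genuine obstacle in the whole argument, namely the classification of closed self-shrinkers for $\alpha\in(\tfrac{1}{n},1+\tfrac{1}{n})$, has already been resolved in Theorem \ref{thm:INT Uniqueness}, so this final statement reduces to assembling the cited pieces.
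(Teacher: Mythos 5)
Your proposal is correct and is exactly the argument the paper intends: the paper offers no separate proof of this theorem, simply noting in the introduction that the convergence-to-a-self-similar-solution results of the cited works for $\alpha>\frac{1}{n+2}$ combined with Theorem \ref{thm:INT Uniqueness} yield the result. Your assembly of Tso/Chow for finite-time contraction, the normalized subsequential convergence, and the uniqueness theorem to identify (and thereby upgrade to full convergence to) the round sphere is precisely that reduction.
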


\medskip
\textit{Discussion of the proof :} In \cite{Chow85GCF}, B. Chow used the quantity  $HK^{-\frac{1}{n}}$ as a subsolution to obtain the convergence of 
the $\alpha$-Gauss curvature flow to the sphere when $\alpha=\frac 1n$. The third order terms of the evolution equation of $HK^{-\frac{1}{n}}$ are controlled by the concavity of the $K^{\frac{1}{n}}$ operator. Also, the evolution equation has no reaction term, because $HK^{-\frac{1}{n}}$ is a homogeneous of degree $0$ function. 

\smallskip
In this paper, we use the quantity $\displaystyle{w(p) \coloneqq K^\alpha \lambda_{\min}^{-1}(p) - \frac{n\alpha -1}{2n\alpha}|F|^2(p)}$, where $\lambda_{\min}$ 
is the smallest principal curvature.  The second order terms in the equation of $\cL\, (K^\alpha \lambda_{\min}^{-1})$ can be controlled by  terms that appear in the equation of  $\frac{n\alpha -1}{2n\alpha}\cL \,|F|^2$, where $\cL$ is the linearized elliptic operator given in Notation \ref{not:Pre notation}. Hence, we only need to control the third order terms of the equation of $\cL \, w$. To deal with the third order terms, we adopt a Pogorelov type estimate with $\lambda_{\min}^{-1}$ replaced by  $(b^{1i}g_{ij}b^{j1})^{\frac{1}{2}}$, 
where $\{b^{ij}\}$ is the inverse matrix of $\{h_{ij}\}$ and at a point where $\lambda_{\min} = b^{11}.$ 
This is the main calculation in our work and will be done in the proof of Theorem \ref{thm:Pog Pogrelov estimate},
where we will show that if $w(p)$ attains its maximum at a point $F(p_0)$, then  the point $F(p_0)$ is an {\em umbilical point}.

\smallskip 
In section \ref{sec-Strong maximum principle} we will use the strong maximum principle to establish our uniqueness 
result, Theorem \ref{thm:INT Uniqueness}. To this end, we need to  introduce the quantity 
$\displaystyle{f(p)\coloneqq K^\alpha \sum^n_{i=1}\lambda_{i}^{-1}(p) - \frac{n\alpha -1}{2\alpha}|F|^2(p)}$ and first show in Proposition \ref{prop:Smax Symmetric function} that 
if it attains its maximum at a point $F(p_0)$,  then the point $F(p_0)$  is also an umbilical point (notice that $\lambda_1, \cdots,\lambda_n$ denote as usual the principal curvatures). This is an immediate consequence of Theorem \ref{thm:Pog Pogrelov estimate}.  Then, we will apply the strong maximum principle on  $f(p)$
and prove our uniqueness result.  In the Pogorelov type estimate on $w(p)$ we can diagonalize the  
second fundamental form $h_{ij}$  only at one given point (the maximum point). The reason we need to  use the quantity $f(p)$  is that in this case we can diagonalize 
$h_{ij}$ at each point. 

\begin{remark}[Pogorelov estimate on powers of a matrix]\label{rmk:INT Pogorelov on power matrix}
Pogorelov type estimates in our context  have been frequently  applied in the past  by using $b^{11}$, the first entry of a matrix $A^{-1} \coloneqq\{b^{ij}\}$. However, 
if one applies the Pogorelov estimate for $b^{11}K^\alpha -\frac{n\alpha-1}{2n\alpha}|F|^2$,  one  can obtain the result of Theorem \ref{thm:Pog Pogrelov estimate} only for $\alpha \in (\frac{1}{n},\frac{1}{2}]$. In this work, by using  instead  $(b^{1i}g_{ij}b^{j1})^{\frac{1}{2}}$, the root of the first entry of the square $A^{-2}$ of the matrix $A^{-1}$, we are able to extend the result of Theorem \ref{thm:Pog Pogrelov estimate} to the range of exponents $\alpha \in (\frac{1}{n},1+\frac{1}{n})$, which includes the classical  case 
of the Gauss curvature flow $\alpha=1$. 

One can apply a similar  Pogorelov type estimate using the $m$-th root of the first entry of $A^{-m}$,  with large $m \in \mathbb{N}$ (depending on $n$) and 
 extend our result to the range of exponents $\alpha \in (\frac{1}{n},1+(\frac{n-1}{n})^{\frac{1}{2}})$. Notice that if $\alpha = 1+(\frac{n-1}{n})^{\frac{1}{2}}$, then  we have $I_1 =0$, where $I_1=\frac{n\alpha-1}{n\alpha}+1-\alpha$ is given in the proof of Theorem \ref{thm:Pog Pogrelov estimate}. 

\smallskip 

Since our goal of this paper is to prove Firey's conjecture in higher dimensions, we provide the proof of  the uniqueness of closed self-shrinkers to the $\alpha$-Gauss curvature flow for $\alpha \in (\frac{1}{n},1+\frac{1}{n})$ by using $A^{-2}$. 
\end{remark}

\section{Preliminaries}

 \begin{notation}\label{not:Pre notation}
For reader's convenience, we summarize the notation as follows. 

\begin{enumerate}
\item We recall the metric $g_{ij} = \langle F_i, F_j \rangle$, where $F_i \coloneqq \nabla_i F$, and  its inverse matrix  $g^{ij}$ of $g_{ij}$, namely $g^{ij}g_{jk}=\delta^i_k$. Also, we use the notation $F^i=g^{ij} \, F_j$.

\item For a strictly convex smooth hypersurface $\Sigma$, we denote by   $b^{ij}$ inverse matrix of its  {\em second fundamental form}  $h_{ij}$, namely $b^{ij}h_{jk}=\delta^i_k$. 

\item We denote by $\cL$  the {\em linearized }   operator 
$$\cL =\alpha  K^{\alpha} b^{ij}\nabla_i \nabla_j$$
Also, $\langle \;, \;\rangle_\cL$ denotes the associated inner product 
$\displaystyle \langle \nabla f,\nabla g \rangle_\cL :=  \alpha  K^{\alpha} b^{ij}\nabla_i f \nabla_j g$, 
 where $f,g$ are differentiable functions on $M^n$, and $\|\cdot\|_\cL $ denotes the $\cL$-norm given by the inner product $\langle \;, \;\rangle_\cL$.

\item We denote as usual by $H$ and $\lambda_{\min}$  the {\em  mean curvature} and the {\em  smallest principal curvature}, respectively.

\item We will use in the sequel the functions  $f:M^n \to \mathbb{R}$ and $w:M^n \to \mathbb{R}$ defined  by
\begin{align*}
& f(p)=\Big(K^{\alpha}b^{ij}g_{ij}- \frac{n\alpha-1}{2\alpha}|F|^2\Big)(p), && w(p)=\Big(K^{\alpha}\lambda_{\min}^{-1}- \frac{n\alpha-1}{2n\alpha}|F|^2\Big)(p).
\end{align*}
\end{enumerate}
 
 \end{notation}

\bigskip

\begin{proposition}
Given a strictly convex smooth solution $F:M^n \to \rno$ of \eqref{eq:INT Shrinker}, the following hold
\begin{align*}
\nabla_i b^{jk} =& -b^{jl}b^{km}\nabla_i h_{lm} , \label{eq:Pre Db} \tag{2.1}\\
\cL \, |F|^2 =& 2\alpha K^{\alpha} b^{ij}(g_{ij}-h_{ij}K^{\alpha})=2\alpha K^{\alpha} b^{ij}g_{ij} -2n\alpha K^{2\alpha}, \label{eq:Pre L|F|^2} \tag{2.2}\\
\nabla_i K^{\alpha}=&h_{ij}\langle F, F^j \rangle , \label{eq:Pre DK^a} \tag{2.3}\\
\cL \, K^\alpha =&\langle F,\nabla K^\alpha\rangle+n\alpha K^\alpha -\alpha K^{2\alpha}H , \label{eq:Pre LK^a} \tag{2.4}\\
 \cL \, b^{pq}=&  K^{-\alpha}b^{pr}b^{qs}\nabla_r K^\alpha \nabla_s K^\alpha+\alpha K^\alpha b^{pr}b^{qs}b^{ij}b^{km}\nabla_r h_{ik}\nabla_s h_{jm} \label{eq:Pre Lb} \tag{2.5}\\
 &+\langle F,\nabla  b^{pq} \rangle - b^{pq} -(n\alpha-1)g^{pq} K^\alpha+\alpha K^\alpha H b^{pq}.
\end{align*}
\end{proposition}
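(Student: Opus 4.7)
\medskip
The plan is to establish the five identities in order by direct differentiation, using throughout the Gauss--Weingarten formulas $\nabla_i F_j = h_{ij}\vec{n}$ and $\nabla_i\vec{n} = -h_i{}^k F_k$, the Codazzi identity $\nabla_i h_{jk} = \nabla_j h_{ik}$, and the shrinker relation $\langle F,\vec{n}\rangle = -K^\alpha$. Identity \eqref{eq:Pre Db} follows from differentiating $b^{ij}h_{jk} = \delta^i_k$. For \eqref{eq:Pre L|F|^2}, computing $\nabla_i\nabla_j|F|^2 = 2g_{ij} - 2h_{ij}K^\alpha$ via Gauss and the shrinker equation, then contracting with $\alpha K^\alpha b^{ij}$ and using $b^{ij}h_{ij}=n$, gives the stated formula. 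For \eqref{eq:Pre DK^a}, differentiating the shrinker equation and invoking Weingarten is immediate since $\langle F_i,\vec{n}\rangle = 0$. For \eqref{eq:Pre LK^a}, differentiating \eqref{eq:Pre DK^a} once more produces $\nabla_k\nabla_i K^\alpha = (\nabla_k h_{ij})\langle F,F^j\rangle + h_{ki} - h_{kj}h_i{}^j K^\alpha$, and contracting with $\alpha K^\alpha b^{ij}$ via the trace identity $\alpha K^\alpha b^{ij}\nabla_i h_{jk} = \nabla_k K^\alpha$ (a consequence of Codazzi and $\log\det h = \log K$) delivers the drift $\langle F,\nabla K^\alpha\rangle$ and the reaction terms $n\alpha K^\alpha - \alpha K^{2\alpha}H$.

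The main step is \eqref{eq:Pre Lb}. Differentiating \eqref{eq:Pre Db} yields
\[
\nabla_j\nabla_i b^{pq} = b^{pr}b^{ls}b^{qm}(\nabla_j h_{rs})(\nabla_i h_{lm}) + b^{pl}b^{qr}b^{ms}(\nabla_j h_{rs})(\nabla_i h_{lm}) - b^{pl}b^{qm}\nabla_j\nabla_i h_{lm}.
\]
After contracting with $\alpha K^\alpha b^{ij}$, the two quadratic-in-$\nabla h$ terms each reshape, via Codazzi, into the same non-trace contraction pattern and together contribute $2\alpha K^\alpha b^{pr}b^{qs}b^{ij}b^{km}\nabla_r h_{ik}\nabla_s h_{jm}$. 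To process the remaining $-\alpha K^\alpha b^{ij}b^{pl}b^{qm}\nabla_j\nabla_i h_{lm}$, first apply Codazzi ($\nabla_i h_{lm} = \nabla_l h_{im}$) and commute $\nabla_j$ past $\nabla_l$ via the Ricci identity and the Gauss equation $R_{ijkl} = h_{ik}h_{jl} - h_{il}h_{jk}$; the resulting curvature correction contributes exactly $-n\alpha K^\alpha g^{pq} + \alpha K^\alpha Hb^{pq}$ after the full contraction. For the surviving $-\alpha K^\alpha b^{pl}b^{qm}b^{ij}\nabla_l\nabla_j h_{im}$, pull $\nabla_l$ outside using the product rule and simplify $b^{ij}\nabla_j h_{im} = \alpha^{-1}K^{-\alpha}\nabla_m K^\alpha$ via Codazzi and the trace identity. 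The $\nabla_l$ derivative of $K^{-\alpha}\nabla_m K^\alpha$ supplies the target first term $K^{-\alpha}b^{pr}b^{qs}\nabla_r K^\alpha\nabla_s K^\alpha$ together with a residual $\nabla_l\nabla_m K^\alpha$, while the $-(\nabla_l b^{ij})\nabla_j h_{im}$ piece (using \eqref{eq:Pre Db}) contributes a compensating $-\alpha K^\alpha b^{pr}b^{qs}b^{ij}b^{km}\nabla_r h_{ik}\nabla_s h_{jm}$ that cancels one copy of the doubled contribution above, leaving exactly the second term of \eqref{eq:Pre Lb}. Finally, differentiating \eqref{eq:Pre DK^a} once more supplies $\nabla_l\nabla_m K^\alpha = (\nabla_l h_{mj})\langle F,F^j\rangle + h_{ml} - h_{mj}h_l{}^j K^\alpha$; the identity $b^{pl}b^{qm}\nabla_l h_{mj} = -\nabla_j b^{pq}$ (from \eqref{eq:Pre Db} and Codazzi) turns the first piece into the drift $\langle F,\nabla b^{pq}\rangle$, and the remaining algebraic terms give $-b^{pq} + g^{pq}K^\alpha$. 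Combining $g^{pq}K^\alpha - n\alpha K^\alpha g^{pq} = -(n\alpha-1)g^{pq}K^\alpha$ completes the derivation.

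The main obstacle is combinatorial bookkeeping rather than conceptual difficulty: tracking the correct sign of the curvature correction in the Ricci commutator, and verifying by repeated use of Codazzi that the various quadratic-in-$\nabla h$ contractions appearing along the way collapse into precisely the two displayed patterns, with the remaining algebraic debris assembling into exactly the coefficients $-(n\alpha-1)g^{pq}K^\alpha + \alpha K^\alpha Hb^{pq}$ demanded by \eqref{eq:Pre Lb}.
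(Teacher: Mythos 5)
Your proposal is correct, and after tracking the signs carefully (the Ricci commutator contribution does come out to $-n\alpha K^\alpha g^{pq}+\alpha K^\alpha H b^{pq}$ with the Gauss equation in the form you state, and the two quadratic-in-$\nabla h$ terms from differentiating \eqref{eq:Pre Db} do assemble, via Codazzi, into $2\alpha K^\alpha b^{pr}b^{qs}b^{ij}b^{km}\nabla_r h_{ik}\nabla_s h_{jm}$, one copy of which is then cancelled by the $(\nabla_l b^{ij})\nabla_j h_{im}$ compensator) the bookkeeping closes up exactly as claimed. Your route differs from the paper's in one structural respect. The paper first derives a Simons-type identity for $\cL\, h_{ij}$ (their equation (2.6)) by computing $\nabla_i\nabla_j K^\alpha$ in two ways — once from differentiating \eqref{eq:Pre DK^a} and once from $\nabla K^\alpha=\alpha K^\alpha b^{pq}\nabla h_{pq}$ — and then reads off both \eqref{eq:Pre LK^a} and \eqref{eq:Pre Lb} by contracting or multiplying that identity with $b$. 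You instead skip $\cL\, h_{ij}$ entirely: for \eqref{eq:Pre LK^a} you trace the extrinsic Hessian formula for $K^\alpha$ directly against $\alpha K^\alpha b^{ij}$, which is a small shortcut; and for \eqref{eq:Pre Lb} you differentiate \eqref{eq:Pre Db} twice, apply the Ricci commutator at the level of $\nabla_j\nabla_l h_{im}$, and then pull $\nabla_l$ out of $b^{ij}\nabla_j h_{im}=\alpha^{-1}K^{-\alpha}\nabla_m K^\alpha$ so that the residual Hessian of $K^\alpha$ can again be controlled from the twice-differentiated shrinker equation. The paper's route is slightly more modular (the $\cL\, h_{ij}$ identity is reusable and arguably the conceptual core), while yours is more self-contained for \eqref{eq:Pre Lb} and slightly leaner for \eqref{eq:Pre LK^a}; the underlying Codazzi/Gauss manipulations are essentially the same.
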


\begin{proof} From $\nabla_i (b^{jk}h_{kl})=\nabla_i \delta^j_l =0$, we can derive $h_{kl}\nabla_i b^{jk}=-b^{jk} \nabla_ih_{kl}$. Hence, we have \eqref{eq:Pre Db} by
\begin{align*}
\nabla_i b^{jm}= b^{lm}h_{kl}\nabla_i b^{jk}=-b^{lm}b^{jk} \nabla_ih_{kl}.
\end{align*}
Also, by definition $\cL \coloneqq \alpha K^\alpha b^{ij}\nabla_i \nabla_j$ we have
\begin{align*}
\cL \, |F|^2 = 2\alpha K^\alpha b^{ij}\langle F_i,F_j \rangle+2\alpha K^\alpha b^{ij}\langle F,\nabla_i\nabla_j F\vec{n}\rangle=2\alpha K^\alpha b^{ij}g_{ij}+2\alpha K^\alpha b^{ij}\langle F,h_{ij}\vec{n}\rangle.
\end{align*}
Thus, the given equation \eqref{eq:INT Shrinker} implies \eqref{eq:Pog D|F|^2}.

Equation \eqref{eq:Pre DK^a} can be simply obtained by differentiating \eqref{eq:INT Shrinker} 
\begin{align*}
\nabla_i K^{\alpha}=h_{ik}\langle F, F^k \rangle.
\end{align*}
Differentiating   the equation above again we  obtain 	
\begin{align*}
\nabla_i\nabla_j K^{\alpha}=\nabla_i h_{jk}\langle F, F^k \rangle+h_{ij}+h_{ik}h^k_j\langle F, \vec{n} \rangle=\langle F,\nabla h_{ij} \rangle+h_{ij}-h_{ik}h^k_jK^\alpha.
\end{align*}
On the other hand, \eqref{eq:Pre Db} and direct differentiation yield
\begin{align*}
\nabla_i\nabla_j K^{\alpha}=\nabla_i(\alpha K^{\alpha}b^{pq}\nabla_j h_{pq})=\alpha K^\alpha b^{pq}\nabla_i\nabla_j h_{pq}+ \alpha^2K^{\alpha}b^{rs}b^{pq}\nabla_ih_{rs} \nabla_j h_{pq}-\alpha K^\alpha b^{pr}b^{qs}\nabla_i h_{rs}\nabla_j h_{pq}.
\end{align*}
Observing 
\begin{align*}
\nabla_i \nabla_j h_{pq}=\nabla_i \nabla_p h_{jq}=&\nabla_p \nabla_i h_{jq}+R_{ipjm}h^m_q+R_{ipqm}h^m_j \\
=&\nabla_p \nabla_q h_{ij}+(h_{ij}h_{pm}-h_{im}h_{jp})h^m_q+(h_{iq}h_{pm}-h_{im}h_{pq})h^m_j
\end{align*}
we obtain
\begin{align*}
\alpha K^{\alpha} b^{pq}\nabla_i \nabla_j h_{pq}=\alpha K^{\alpha}b^{pq}\nabla_p \nabla_q h_{ij}+\alpha K^{\alpha} H h_{ij}-n\alpha K^{\alpha} h_{im}h^m_j=\cL \, h_{ij}+\alpha K^{\alpha} H h_{ij}-n\alpha K^{\alpha} h_{im}h^m_j.
\end{align*} 
Combining the equations above yields
\begin{align*}\label{eq:Pre Lh}
\cL \, h_{ij} = &-\alpha^2K^{\alpha}b^{rs}b^{pq}\nabla_ih_{rs} \nabla_j h_{pq}+\alpha K^\alpha b^{pr}b^{qs}\nabla_i h_{rs}\nabla_j h_{pq}\tag{2.6}\\
&+\langle F,\nabla h_{ij} \rangle+h_{ij}+(n\alpha-1)h_{ik}h^k_jK^\alpha -\alpha K^{\alpha} H h_{ij}.
\end{align*} 
We now observe
\begin{align*}
\cL \, K^\alpha=&\alpha K^\alpha b^{ij} \nabla_i ( \alpha K^\alpha b^{pq} \nabla_j h_{pq})\\
=&\alpha^3 K^{2\alpha}b^{ij}b^{pq}b^{rs}\nabla_i h_{rs}\nabla_j h_{pq}-\alpha^2 K^{2\alpha}b^{ij}b^{pr}b^{qs}\nabla h_{rs}\nabla_j h_{pq}+\alpha K^\alpha b^{pq}\cL\, h_{pq}
\end{align*}
which gives  \eqref{eq:Pre LK^a}, since 
\begin{align*}
\cL \, K^\alpha =\alpha K^{\alpha}b^{ij}\big(\langle F,\nabla h_{ij} \rangle+h_{ij}+(n\alpha-1)h_{ik}h^k_jK^\alpha -\alpha K^{\alpha} H h_{ij}\big)=\langle F,\nabla K^\alpha\rangle+n\alpha K^\alpha -\alpha K^{2\alpha}H .
\end{align*} 

Finally, by using \eqref{eq:Pre Db}, we can derive
 \begin{align*}
 \cL \, b^{pq}= \alpha K^\alpha b^{ij} \nabla_i ( -b^{pr}b^{qs}\nabla_j h_{rs} )=2\alpha K^\alpha b^{ij}b^{pk}b^{rm}b^{qs}\nabla_i h_{km}\nabla_j h_{rs}-b^{pr}b^{qs} \cL \, h_{rs}.
 \end{align*}
 Applying \eqref{eq:Pre Lh} yields
 \begin{align*}
 \cL \, b^{pq}=& \alpha^2 K^{\alpha}b^{pr}b^{qs}b^{ij}b^{km}\nabla_r h_{ij} \nabla_s h_{km}+\alpha K^\alpha b^{pr}b^{qs}b^{ij}b^{km}\nabla_r h_{ik}\nabla_s h_{jm}\\
 &+\langle F,\nabla  b^{pq} \rangle - b^{pq} -(n\alpha-1)g^{pq} K^\alpha+\alpha K^\alpha H b^{pq}.
 \end{align*}
Thus, $\nabla K^\alpha=\alpha K^\alpha b^{ij}\nabla h_{ij}$ gives the desired result.
\end{proof}

\section{Pogorelov type computation}\label{sec-Pogorelov}

We consider the function $w:M^n \to \mathbb{R}$ given by 
$$w(p) \coloneqq \big(K^{\alpha}\lambda_{\min}^{-1}- \frac{n\alpha-1}{2n\alpha}|F|^2\big)(p).$$ We will employ in this
section a Pogorelov type computation to show that the maximum point of $w(p)$ is an umbilical point.
We begin with the following standard  observation which we include here for the reader's convenience. 

\begin{proposition}[Euler's formula] \label{prop:Pog Euler's formula} Let $\Sigma$ be a smooth strictly convex hypersurface  and $F:M^n \rightarrow \rno$ be a smooth immersion with $F(M^n)=\Sigma$. Then, given a coordinate chart $\varphi:U(\subset \rn) \to M^n$ of a point $p \in \varphi(U) $, the following holds  for each $i\in \{1,\cdots,n\}$
\begin{align*}
\sum_{j=1}^n\frac{b^{ij}b_j^i(p)}{g^{ii}(p)} \leq \frac{1}{\lambda_{\min}^2(p)}. 
\end{align*}
\end{proposition}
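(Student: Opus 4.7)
\emph{Plan.} I would recast the inequality as a pointwise linear-algebra statement about the matrices $B = (b^{ij})$, $G = (g_{ij})$, and $\Lambda = \mathrm{diag}(\lambda_1, \ldots, \lambda_n)$ at $p$, and then prove it by a simultaneous diagonalization of $g$ and $h$ followed by a convex-combination argument.

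First, using $b^i_j = g_{jk}\,b^{ik}$ (the usual $g$-lowering of the second index on $b$), I observe
\begin{align*}
\sum_{j=1}^n b^{ij}\,b^i_j \;=\; \sum_{j,k} b^{ij} g_{jk} b^{ki} \;=\; (BGB)^{ii}, \qquad g^{ii} = (G^{-1})^{ii}.
\end{align*}
Hence the assertion is equivalent to the purely matrix-level inequality
\begin{align*}
(BGB)^{ii} \;\leq\; \lambda_{\min}^{-2}\,(G^{-1})^{ii} \qquad \text{for each fixed } i.
\end{align*}

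Next, since the Weingarten operator $g^{-1}h$ is $g$-self-adjoint, the spectral theorem furnishes a basis $\{e_a\}$ of $T_pM$ with $g(e_a,e_b) = \delta_{ab}$ and $h(e_a,e_b) = \lambda_a \delta_{ab}$. Expanding $\partial_i = \sum_a P^a_i\,e_a$ gives $G = P^T P$ and $H = P^T \Lambda P$ in matrix form, whence
\begin{align*}
B = P^{-1}\Lambda^{-1}(P^T)^{-1}, \qquad BGB = P^{-1}\Lambda^{-2}(P^T)^{-1}, \qquad G^{-1} = P^{-1}(P^T)^{-1}.
\end{align*}
Setting $M := P^{-1}$, the relevant diagonal entries become
\begin{align*}
(BGB)^{ii} = \sum_{a=1}^n \lambda_a^{-2}\, M_{ia}^2, \qquad (G^{-1})^{ii} = \sum_{a=1}^n M_{ia}^2 > 0,
\end{align*}
where positivity follows from the positive-definiteness of $g$.

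Consequently the ratio $(BGB)^{ii}/(G^{-1})^{ii}$ is a convex combination of $\lambda_1^{-2},\ldots,\lambda_n^{-2}$ with nonnegative weights $M_{ia}^2/\sum_b M_{ib}^2$ summing to one, and is therefore bounded above by $\max_a \lambda_a^{-2} = \lambda_{\min}^{-2}$, which is exactly the claimed estimate. I do not expect any real obstacle: the argument is elementary linear algebra, and the only care required is index bookkeeping when identifying the tensorial quantity $\sum_j b^{ij}b^i_j$ with the matrix entry $(BGB)^{ii}$. Equivalently, one could phrase the same fact spectrally by noting that $GB$ is self-adjoint on $(T_p^*M, g^{-1})$ with eigenvalues $\lambda_a^{-1}$, so its operator norm squared is $\lambda_{\min}^{-2}$, and then apply this bound to the coordinate covector $\xi_i = dx^i$.
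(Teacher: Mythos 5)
Your argument is correct and is essentially the same as the paper's: both diagonalize the Weingarten map via an orthonormal eigenbasis, express $B$, $G$, $BGB$ through the change-of-basis matrix, and reduce the claim to bounding $\sum_a M_{ia}^2\lambda_a^{-2}$ by $\lambda_{\min}^{-2}\sum_a M_{ia}^2 = \lambda_{\min}^{-2}g^{ii}$. The only cosmetic difference is that you phrase the final step as a convex-combination bound, while the paper pulls $\lambda_{\min}^{-2}$ out of the sum directly.
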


\begin{proof}
For a fixed point $p\in M^n$, we choose an  orthonormal basis  $\{ E_1, \cdots,E_n \}$ of $T\Sigma_{F(p)}$ such that $L(E_j) =\lambda_j\, E_j $,
where $L$ is the Weingarten map  and $\lambda_1,\cdots,\lambda_n$ are the principal curvatures of $\Sigma$ at $p$. 
Given a chart $(\varphi,U)$ of $p \in \varphi(U) \subset M^n$, we denote by $\{ a_{ij}\}$  the matrix satisfying $F_i(p)\coloneqq \nabla_i F(p)  =a_{ij}E_j$  and by $\{c_{ij}\}$  the diagonal matrix  $\text{diag}(\lambda_1,\cdots,\lambda_n)$. We also denote by $\{a^{ij}\}$ and $\{c^{ij}\}$ the inverse matrices of $\{a_{ij}\}$ and $\{c_{ij}\}$, respectively. 

We observe $g_{ij}(p)=\langle F_i,F_j \rangle(p)=\langle a_{ik}E_k,a_{jl}E_l \rangle =a_{ik}a_{jk}$. Also, we can obtain $F^i(p)=a^{ji}E_j$ by $a^{ji}=a^{jk}\langle F_k(p),F^i(p)\rangle=\langle a^{jk}a_{kl}E_l,F^i(p)\rangle=\langle E_j,F^i(p) \rangle$. So, we have $g^{ij}(p)=\langle F^i,F^j\rangle(p)= a^{ki}a^{kj}$. In addition, $   L F_i(p)=h_{ij}(p) F^j(p)  $  implies
\begin{align*}
a^{mi}E_m=&F^i(p)=b^{ij}(p)h_{jk}(p)F^k(p)=b^{ij}(p)LF_j(p)\\=&b^{ij}(p)L(a_{jk}E_k)
=b^{ij}(p)a_{jk}L(E_k)=b^{ij}(p)a_{jk}\lambda_kE_k=b^{ij}(p)a_{jk}c_{km}E_m. 
\end{align*}
Hence, we have $b^{in}(p)=b^{ij}(p)a_{jk}c_{km}c^{ml}a^{ln}=a^{mi}c^{ml}a^{ln}$, and thus the following holds
\begin{align*}
b^{1r}g_{rs}b^{s1}(p)=&a^{i1}c^{ij}a^{jr}a_{rk}a_{sk}a^{m1}c^{ml}a^{ls}=a^{i1}c^{ij}\delta^j_k \delta^l_k a^{m1}c^{ml}=a^{i1}c^{ij} a^{m1}c^{mj}=\sum_{j} (a^{j1})^2\lambda^{-2}_j \\
\leq  & \sum_{j} (a^{j1})^2\lambda^{-2}_{\min} = \lambda^{-2}_{\min}\sum_{k,j} \langle a^{k1}E_k,a^{j1}E_j \rangle = \lambda^{-2}_{\min} \langle F^1(p),F^1(p) \rangle = \lambda^{-2}_{\min}g^{11}(p),
\end{align*}
which is the desired result for $i=1$ and  we can obtain the same result for each $i \in \{1,\cdots,n\}$.
\end{proof}

\bigskip

We will now show that one of  the Pogorelov type expressions  of the function $w$ plays a role as a subsolution of \eqref{eq:INT Shrinker} at a given maximum point, to  imply that the maximum point of $w(p)$ is an umbilical point.

\begin{theorem}[Pogorelov type computation]\label{thm:Pog Pogrelov estimate}
Let $\Sigma$ be a strictly convex smooth closed solution of \eqref{eq:INT Shrinker} for an exponent $\alpha \in (\frac{1}{n},1+\frac{1}{n})$. Assume that $F:M^n \to \rno$ is a smooth immersion such that $F(M^n)= \Sigma$, and the continuous function $w(p)$ attains its maximum at a point $p_0$. Then, $F(p_0)$ is an umbilical point and $\nabla |F|^2(p_0)=0$ holds.
\end{theorem}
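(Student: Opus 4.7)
The plan is to apply the strong maximum principle to a smooth auxiliary function that agrees with $w$ at $p_0$ and lies below $w$ on a neighborhood, thereby bypassing the lack of smoothness of $\lambda_{\min}^{-1}$. Concretely, I fix Riemann-normal coordinates on $M^n$ at $p_0$, so that $g_{ij}(p_0)=\delta_{ij}$ and $\bd_k g_{ij}(p_0)=0$, aligned so that $h_{ij}(p_0)$ is diagonal with $h_{11}(p_0)=\lambda_{\min}(p_0)$, and set
\[
\tilde w(p) \coloneqq K^{\alpha}(p)\,\psi(p) - \frac{n\alpha-1}{2n\alpha}|F|^2(p), \qquad \psi \coloneqq \frac{(b^{1r}g_{rs}b^{s1})^{1/2}}{(g^{11})^{1/2}}.
\]
By Proposition \ref{prop:Pog Euler's formula}, $\psi\le\lambda_{\min}^{-1}$ in this chart with equality at $p_0$, so $\tilde w\le w$ near $p_0$ with equality at $p_0$. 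Hence $p_0$ is a local maximum of the smooth function $\tilde w$, and I may use $\nabla\tilde w(p_0)=0$ together with $\cL\,\tilde w(p_0)\le 0$.

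Next I expand $\cL\,(K^\alpha\psi)=\psi\,\cL\, K^\alpha+K^\alpha\,\cL\,\psi+2\langle\nabla K^\alpha,\nabla\psi\rangle_{\cL}$ and evaluate each piece at $p_0$ with the identities (2.1)--(2.5). The weight $\frac{n\alpha-1}{2n\alpha}$ in front of $|F|^2$ is chosen precisely to cancel the reaction term $-n\alpha K^{2\alpha}$ from (2.2) against the combination $-\alpha K^{2\alpha}H\cdot b^{11}$ from (2.4) and $-(n\alpha-1)K^\alpha g^{11}+\alpha K^\alpha H b^{11}$ from (2.5). After this cancellation, and after using the first-order condition $\nabla\tilde w(p_0)=0$ to eliminate $\nabla|F|^2(p_0)$ in favor of $\nabla h_{11}(p_0)$ and $\nabla K^\alpha(p_0)$, the quantity $\cL\,\tilde w(p_0)$ reduces to a quadratic form in the third-order terms $\nabla_k h_{ij}(p_0)$ plus a symmetric polynomial in the principal curvatures $\lambda_1,\dots,\lambda_n$.

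The crucial step is the Pogorelov-type lower bound for this third-order quadratic form. Differentiating $\psi^2=b^{1r}g_{rs}b^{s1}/g^{11}$ twice via (2.1), and then symmetrizing through the Codazzi identity $\nabla_i h_{jk}=\nabla_j h_{ik}$, yields both diagonal squares $(\nabla_i h_{11})^2/\lambda_1^4$ and mixed squares $(\nabla_i h_{1k})^2/(\lambda_1^2\lambda_k^2)$. The algebraic gain of using the square root of the first entry of $A^{-2}$, rather than $b^{11}$ itself, is precisely the extra weight on the mixed squares, which is large enough to absorb the negative cross terms generated by $\langle\nabla K^\alpha,\nabla\psi\rangle_{\cL}$ and $\cL\, K^\alpha$ for every $\alpha\in(\tfrac{1}{n},1+\tfrac{1}{n})$; this is the improvement over the $b^{11}$-version, which closes only for $\alpha\le\tfrac{1}{2}$, as noted in Remark \ref{rmk:INT Pogorelov on power matrix}. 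Discarding the resulting nonnegative gradient form leaves
\[
0\ge \cL\,\tilde w(p_0)\ \ge\ K^{2\alpha}(p_0)\,Q(\lambda_1,\ldots,\lambda_n),
\]
where $Q\ge 0$ is a Schur-type symmetric function that vanishes exactly when $\lambda_1=\cdots=\lambda_n$. This forces $F(p_0)$ to be an umbilical point.

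Finally, the squeeze above forces both $Q$ and the discarded Pogorelov gradient form to vanish at $p_0$; the latter yields $\nabla h_{11}(p_0)=0$, and the first-order condition $\nabla\tilde w(p_0)=0$ then gives $\nabla|F|^2(p_0)=0$. The main technical obstacle is the third-order bookkeeping in the Pogorelov step, namely tracking the cross terms produced by $\cL\,\psi$, $\cL\, K^\alpha$, and the coupling $\langle\nabla K^\alpha,\nabla\psi\rangle_{\cL}$, and verifying that the $A^{-2}$-weighting produces a form with a definite sign over the full range $\alpha\in(\tfrac{1}{n},1+\tfrac{1}{n})$.
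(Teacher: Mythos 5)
Your proposal is essentially the paper's own argument: introduce the auxiliary $\widebar w = K^\alpha \left(b^{1r}g_{rs}b^{s1}/g^{11}\right)^{1/2} - \frac{n\alpha-1}{2n\alpha}|F|^2$, use Proposition~\ref{prop:Pog Euler's formula} to touch $w$ from below at $p_0$, apply the second-derivative test, and close the Pogorelov bookkeeping using the extra weight the $A^{-2}$-root places on the mixed third-order squares. One small slip at the end: the paper does not first deduce $\nabla h_{11}(p_0)=0$, but instead substitutes $\nabla\widebar w(p_0)=0$ together with \eqref{eq:Pre DK^a}, \eqref{eq:Prog bDK^a=D|F|^2}, and \eqref{eq:Pog D|F|^2} to convert every surviving gradient term into $\langle F,F_i\rangle^2 J_i$ and $\langle F,F_1\rangle^2 I_1$ with explicit positive coefficients, so their vanishing gives $\nabla|F|^2(p_0)=0$ directly; your route through $\nabla h_{11}(p_0)=0$ is workable but needs the umbilicity conclusion already in hand (so that $\theta_i=1\neq\beta$) to pass from $(\beta-\theta_i)\langle F,F_i\rangle=0$ to $\langle F,F_i\rangle=0$.
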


\begin{proof}
We begin by choosing a coordinate chart $(U,\varphi)$ of $p_0 \in \varphi(U) \subset M^n$ such that  the covariant derivatives $\big\{ \nabla_i F(p_0)\coloneqq\bd_i (F \circ \varphi)(\varphi^{-1}(p_0)) \big \}_{i=1,\cdots,n}$ form an orthonormal basis of $T\Sigma_{F(p_0)}$ satisfying
\begin{align*}
g_{ij}(p_0)=\delta_{ij}, \qquad  h_{ij}(p_0)=\delta_{ij}\lambda_i(p_0), \qquad  \lambda_1(p_0) =\lambda_{\min}(p_0),
\end{align*}
which guarantees   $b^{11}(p_0) = \lambda^{-1}_{\min}(p_0) $ and $g^{11}(p_0) = 1$.
Next, we  define the  function $\widebar w: \varphi(U) \rightarrow \mathbb{R}$ by
\begin{align*}
\widebar w(p)\coloneqq K^\alpha \Big(\frac{b^{1i}g_{ij}b^{j1}}{g^{11}} \Big)^{\frac{1}{2}}(p) -\frac{n\alpha-1}{2n\alpha}|F|^2(p). 
\end{align*}
Then,  by Proposition \ref{prop:Pog Euler's formula} we have 
\begin{align*}
\widebar w(p) \leq w(p) \leq w(p_0)  =\widebar w(p_0),
\end{align*}
which means that $\widebar w$ attains its maximum at $p_0$.  

\smallskip

We will now calculate  $\displaystyle{\cL \widebar w \coloneqq \alpha K^\alpha b^{ij} \nabla_i \nabla_j \widebar w} \, $  at the point $p_0$.
First  we derive the following equation from \eqref{eq:Pre Lb} 
\begin{align*}
 \cL \, \big ( b^1_pb^{p1} \big ) =&2\alpha K^{\alpha}b^{ij}\nabla_i b^{p1} \nabla_j b^1_p+ 2 K^{-\alpha}b^1_p b^{pr}b^{1s}\nabla_r K^{\alpha} \nabla_s K^{\alpha}+2\alpha K^\alpha b^1_pb^{pr}b^{1s}b^{ij}b^{km}\nabla_r h_{ik}\nabla_s h_{jm}\\
 &+\langle F,\nabla ( b^1_pb^{p1}) \rangle - 2b^1_pb^{p1} -2(n\alpha-1) K^\alpha b^{11}+2\alpha K^\alpha H b^1_pb^{p1}.
 \end{align*}
Thus, we obtain
 \begin{align*}
 \cL \, \Big(\frac{b^1_pb^{p1}}{g^{11}} \Big)^{\frac{1}{2}}=&-\frac{\alpha K^\alpha b^{ij}\nabla_i (b^1_pb^{1p})\nabla_j (b^1_qb^{1q})}{4(b^1_rb^{r1})^{\frac{3}{2}}(g^{11})^{\frac{1}{2}}}+\frac{\alpha K^{\alpha}b^{ij}\nabla_i b^{p1} \nabla_j b^1_p}{(b^1_qb^{q1}g^{11})^{\frac{1}{2}}}\\
 &+\frac{b^1_p b^{pr}b^{1s} \nabla_r K^{\alpha} \nabla_s K^{\alpha}}{K^\alpha(b^1_qb^{q1}g^{11})^{\frac{1}{2}}}+\frac{\alpha K^\alpha b^1_pb^{pr}b^{1s}b^{ij}b^{km}\nabla_r h_{ik}\nabla_s h_{jm}}{(b^1_qb^{q1}g^{11})^{\frac{1}{2}}}\\
 &+\big \langle F,\nabla   \big(b^1_pb^{p1}/g^{11} \big)^{\frac{1}{2}} \big \rangle -  \Big(\frac{b^1_pb^{p1}}{g^{11}} \Big)^{\frac{1}{2}} - \frac{(n\alpha-1) K^\alpha b^{11}}{(b^1_pb^{p1}g^{11})^{\frac{1}{2}}}+\alpha K^\alpha H  \Big(\frac{b^1_pb^{p1}}{g^{11}} \Big)^{\frac{1}{2}}.
 \end{align*}
Combining this with    \eqref{eq:Pre LK^a} yields
\begin{align*}\label{eq:Pog Full eq}
 \cL \, \widebar w =&-\frac{n\alpha -1}{2n\alpha} \cL \, |F|^2+ 2\Big\langle \nabla K^{\alpha},\nabla \Big(\frac{b^1_pb^{p1}}{g^{11}} \Big)^{\frac{1}{2}}\Big\rangle_{\cL}-\frac{\alpha K^{2\alpha} b^{ij}b^1_pb^1_q\nabla_i b^{1p}\nabla_j b^{1q}}{(b^1_rb^{r1})^{\frac{3}{2}}(g^{11})^{\frac{1}{2}}}  \tag{3.1}\\
 &+\frac{\alpha K^{2\alpha}b^{ij}\nabla_i b^{p1} \nabla_j b^1_p}{(b^1_qb^{q1}g^{11})^{\frac{1}{2}}}+\frac{b^1_p b^{pr}b^{1s} \nabla_r K^{\alpha} \nabla_s K^{\alpha}}{(b^1_qb^{q1}g^{11})^{\frac{1}{2}}}+\frac{\alpha K^{2\alpha} b^1_pb^{pr}b^{1s}b^{ij}b^{km}\nabla_r h_{ik}\nabla_s h_{jm}}{(b^1_qb^{q1}g^{11})^{\frac{1}{2}}}\\
 &+\Big \langle F,\nabla  \Big( K^{\alpha}\big(b^1_pb^{p1}/g^{11} \big)^{\frac{1}{2}}\Big) \Big \rangle +(n\alpha-1)  K^\alpha\Big(\frac{b^1_pb^{p1}}{g^{11}} \Big)^{\frac{1}{2}} - \frac{(n\alpha-1) K^{2\alpha} b^{11}}{(b^1_pb^{p1}g^{11})^{\frac{1}{2}}}.
 \end{align*}
Observe that 
 \begin{align*}
 2\Big\langle \nabla K^{\alpha},\nabla \Big(\frac{b^1_pb^{p1}}{g^{11}} \Big)^{\frac{1}{2}}\Big\rangle_{\cL}=2\alpha K^\alpha b^{ij}(g^{11})^{-\frac{1}{2}}\big(b^1_q b^{q1} \big)^{-\frac{1}{2}}b^1_p\nabla_i K^{\alpha} \nabla_j  b^{p1},
 \end{align*}
and 
\begin{align*}
\nabla  \Big( K^{\alpha}\big(b^1_pb^{p1}/g^{11} \big)^{\frac{1}{2}}\Big)=\nabla \widebar w +\frac{n\alpha -1}{2n\alpha}\nabla |F|^2.
\end{align*}
Hence,  applying the equations above, \eqref{eq:Pre L|F|^2} and $\nabla \widebar w(p_0)=0$ to \eqref{eq:Pog Full eq} 
yields that  the following holds at the maximum point $p_0$
\begin{align*}\label{eq:Pog 1st reduced eq}
0 \geq\, & 2\alpha K^\alpha \sum_{i=1}^n b^{ii}\nabla_i K^\alpha \nabla_i b^{11}-\alpha K^{2\alpha}\sum_{i=1}^n b^{ii}h_{11}|\nabla_i b^{11}|^2  +\alpha K^{2\alpha} \sum_{j,p} b^{jj}h_{11}|\nabla_j b^{p1}|^2 +|b^{11}\nabla_1 K^{\alpha}|^2 \tag{3.2}\\
 &+\alpha K^{2\alpha} (b^{11})^2\sum_{i,j}b^{ii}b^{jj}|\nabla_1 h_{ij}|^2+\frac{n\alpha -1}{2n\alpha} \langle F,\nabla   |F|^2 \rangle +(n\alpha-1)K^\alpha \big(b^{11}-\frac{1}{n}\sum_{i=1}^n b^{ii}\big).
 \end{align*}
By \eqref{eq:Pre Db}, the second and third terms on  the right hand side of the  inequality above \eqref{eq:Pog 1st reduced eq} satisfy
 \begin{align*}
&-\sum_{i=1}^n b^{ii}h_{11}|\nabla_i b^{11}|^2+\sum_{j,p} b^{jj}h_{11}|\nabla_j b^{p1}|^2=-\sum_{i=1}^n b^{ii}(b^{11})^3|\nabla_i h_{11}|^2+\sum_{j,p} b^{jj}b^{11}(b^{pp})^2|\nabla_j h_{p1}|^2 \\
&=\sum_{j=1}^n\sum_{p\neq 1} b^{jj}b^{11}(b^{pp})^2|\nabla_j h_{p1}|^2 \geq \sum_{p\neq 1} (b^{11} b^{pp})^2|\nabla_p h_{11}|^2 =\sum_{p\neq 1} (b^{pp}h_{11})^2|\nabla_p b^{11}|^2 .
 \end{align*}
Also, by \eqref{eq:Pre Db}  the fifth term on  the right hand side of  \eqref{eq:Pog 1st reduced eq} satisfies 
\begin{align*}
(b^{11})^2\sum_{i,j}b^{ii}b^{jj}|\nabla_1 h_{ij}|^2 \geq (b^{11})^4|\nabla_1 h_{11}|^2+2\sum_{i\neq 1}(b^{11})^3b^{ii}|\nabla_i h_{11}|^2=|\nabla_1 b^{11}|^2+2\sum_{i\neq 1}b^{ii}h_{11}|\nabla_i b^{11}|^2.
\end{align*} 
Furthermore, we have
 \begin{align*}
 \alpha K^{2\alpha} |\nabla_1 b^{11}|^2 
 +2\alpha K^\alpha b^{11}\nabla_1 K^\alpha \nabla_1 b^{11} \geq - \alpha |b^{11}\nabla_1 K^{\alpha}|^2. 
 \end{align*}
Hence, by applying the inequalities above, we can reduce \eqref{eq:Pog 1st reduced eq} to
\begin{align*}\label{eq:Pog 2nd reduced eq}
0 \geq\, & 2\alpha \sum_{i \neq 1} b^{ii}\nabla_i K^\alpha \big(K^\alpha\nabla_i b^{11}\big)+\alpha \sum_{p\neq 1} (b^{pp}h_{11})^2|K^\alpha\nabla_p b^{11}|^2 +2\alpha\sum_{i\neq 1}b^{ii}h_{11}|K^\alpha\nabla_i b^{11}|^2 \tag{3.3}\\
 &+(1- \alpha) |b^{11}\nabla_1 K^{\alpha}|^2+\frac{n\alpha -1}{2n\alpha} \langle F,\nabla   |F|^2 \rangle +(n\alpha-1)K^\alpha \big(b^{11}-\frac{1}{n}\sum_{i=1}^n b^{ii}\big).
 \end{align*} 
We now employ \eqref{eq:Pre DK^a} to obtain the following at the point $p_0$
\begin{align*}\label{eq:Prog bDK^a=D|F|^2}
b^{ii}\nabla_i K^\alpha =b^{ii}h_{ii}\langle F,F^i \rangle=\langle F,F^i \rangle.\tag{3.4}
\end{align*} 
In addition, at the point $p_0$, $\nabla_i \widebar w(p_0) =0$ yields 
\begin{align*}
K^\alpha\nabla_i b^{11}=-b^{11}\nabla_i K^\alpha +\frac{n\alpha-1}{2n\alpha}\nabla_i |F|^2=-b^{11}h_{ii}\langle F,F^i \rangle+\frac{n\alpha-1}{n\alpha}\langle F,F_i \rangle= (\beta-\theta_i)\langle F,F_i\rangle,
\end{align*} 
where $\theta_i=b^{11}h_{ii}(p_0)$ and  $\beta = \frac{n\alpha-1}{n\alpha}$. We also have 
\begin{align*}\label{eq:Pog D|F|^2}
\langle F,\nabla   |F|^2 \rangle\coloneqq \langle F, (\nabla_i   |F|^2) F^i \rangle  =\langle F, F^i\rangle (\nabla_i   |F|^2)=2\langle F,F_i\rangle\langle F,F^i\rangle.\tag{3.5}
\end{align*} 
Hence, we can rewrite \eqref{eq:Pog 2nd reduced eq} as 
  \begin{align*}\label{eq:Pog Final eq}
0 \geq\, &  \sum_{i \neq 1}\langle F,F_i \rangle^2J_i+ \langle F,F_1\rangle^2 I_1 +(n\alpha-1)K^\alpha \big(b^{11}-\frac{1}{n}\sum_{i=1}^n b^{ii}\big), \tag{3.6}
 \end{align*} 
where
 \begin{align*}
& I_1 =\frac{n\alpha -1}{n\alpha}+1-\alpha ,&&  J_i=2\alpha \Big(\beta-\theta_i\Big)+\alpha \big(\theta_i^{-2}+2\theta_i^{-1}\big)\big(\beta-\theta_i\big)^2 +\beta.
 \end{align*}
We observe that $I_1 >0 $ holds, and also $J_i$ satisfies
  \begin{align*}
 J_i=&2\alpha \beta-2\alpha \theta_i+\alpha \beta^2\theta_i^{-2}+2\alpha\beta^2\theta_i^{-1}-2\alpha\beta\theta_i^{-1}-4\alpha\beta+\alpha+2\alpha\theta_i +\beta\\
 =&\alpha(1-\beta) +\beta(1-\alpha)+2\alpha\beta(\beta-1)\theta_i^{-1}+\alpha \beta^2\theta_i^{-2}
=  \frac{1}{n}+\beta(1-\alpha)-\frac{2\beta}{n} \theta_i^{-1}+\alpha \beta^2\theta_i^{-2} \\ 
= & \beta(1-\alpha)+\frac{1}{n}+\alpha \Big(\beta \theta_i^{-1}-\frac{1}{n\alpha}\Big)^2-\frac{1}{n^2\alpha}
\geq  \beta  (1-\alpha)+\frac{1}{n}\Big(\frac{n\alpha-1}{n\alpha}\Big)=\beta(1-\alpha+\frac{1}{n})>0.
 \end{align*}
Since we have $b^{11}(p_0)=\lambda_{\min}^{-1}(p_0) \geq \lambda_i^{-1} (p_0) \geq b^{ii}(p_0)$ and $\langle F,F_i\rangle^2(p_0) \geq 0$  for all $i \in \{1,\cdots,n\}$, the inequality  \eqref{eq:Pog Final eq} and $I_1, J_i >0$ give the desired result.
\end{proof} 
 
\section{Strong maximum principle} \label{sec-Strong maximum principle}

In this section, we will show how Theorem \ref{thm:Pog Pogrelov estimate} can be modified to give us the proof of
our main result, Theorem \ref{thm:INT Uniqueness}. To this end, we 
will  introduce the new  geometric, chart-independent quantity  
$$f(p)=\big(K^{\alpha}b^{ij}g_{ij}- \frac{n\alpha-1}{2\alpha}|F|^2\big)(p)$$
and apply the  strong maximum principle.  If we use $w(p)$, $h_{ij}$ can be diagonalized only  at one given point. However, if we employ $f(p)$, we can diagonalize $h_{ij}$ at each point.  We begin with the following observation
which simply follows from Theorem \ref{thm:Pog Pogrelov estimate}. 
 
\begin{proposition}\label{prop:Smax Symmetric function}
Let $\Sigma$ be a strictly convex smooth closed solution of \eqref{eq:INT Shrinker} for an exponent $\alpha \in (\frac{1}{n},1+\frac{1}{n})$. Assume that $F:M^n \to \rno$ is a smooth immersion such that $F(M^n)= \Sigma$  , and the continuous function $ f(p)$ attains its maximum at a point $p_0$. Then, $F(p_0)$ is an umbilical point and $\nabla |F|^2(p_0)=0$ holds.
\end{proposition}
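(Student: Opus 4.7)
The plan is to reduce Proposition \ref{prop:Smax Symmetric function} to Theorem \ref{thm:Pog Pogrelov estimate} via a clean pointwise inequality between $f$ and $n\,w$. First I will re-express $f$ in terms of the principal curvatures. At an arbitrary point $p\in M^n$, diagonalizing the second fundamental form in an orthonormal frame ($g_{ij}=\delta_{ij}$, $h_{ij}=\delta_{ij}\lambda_i$) yields $b^{ij}g_{ij}(p)=\sum_{i=1}^n \lambda_i^{-1}(p)$, a chart-independent identity. Consequently
$$
f(p)= K^\alpha(p)\sum_{i=1}^n \lambda_i^{-1}(p) - \frac{n\alpha-1}{2\alpha}|F|^2(p), \qquad n\, w(p)= nK^\alpha(p)\lambda_{\min}^{-1}(p) - \frac{n\alpha-1}{2\alpha}|F|^2(p),
$$
where I have used $n\cdot\frac{n\alpha-1}{2n\alpha}=\frac{n\alpha-1}{2\alpha}$ so the quadratic terms match exactly. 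Since $\lambda_i\ge \lambda_{\min}$ for every $i$, this gives the pointwise comparison
$$
f(p)\leq n\, w(p) \qquad \text{for all } p\in M^n,
$$
with equality precisely when $F(p)$ is an umbilical point.

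Next I will pin down $p_0$ using a sandwich argument. Let $q_1\in M^n$ be a maximum point of the continuous function $w$. By Theorem \ref{thm:Pog Pogrelov estimate}, $F(q_1)$ is umbilical, so equality holds in the comparison above at $q_1$, yielding $f(q_1)=n\, w(q_1)=n\max_{M^n} w$. Combining maximality of $f$ at $p_0$ with the pointwise inequality,
$$
f(p_0)=\max_{M^n} f \ \ge\ f(q_1) \ =\ n\max_{M^n} w \ \ge\ n\, w(p_0) \ \ge\ f(p_0),
$$
forcing equality throughout. The identity $f(p_0)=n\, w(p_0)$ forces $F(p_0)$ to be umbilical, and the identity $w(p_0)=\max_{M^n} w$ means $p_0$ is itself a maximum point of $w$. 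A second application of Theorem \ref{thm:Pog Pogrelov estimate} at $p_0$ then delivers $\nabla |F|^2(p_0)=0$.

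There is no serious obstacle: all the analytic work is absorbed into Theorem \ref{thm:Pog Pogrelov estimate}, and what remains is the algebraic observation that $f\leq n\,w$ pointwise with equality at umbilical points, plus a two-line sandwich. The only point requiring mild care is verifying that the coefficients of $|F|^2$ in $f$ and $n\,w$ coincide exactly — which they do by the very choice of the normalizations $\frac{n\alpha-1}{2\alpha}$ and $\frac{n\alpha-1}{2n\alpha}$ in the definitions of $f$ and $w$.
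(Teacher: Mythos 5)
Your proposal is correct and is essentially the same argument the paper gives: establish the chart-independent identity $b^{ij}g_{ij}=\sum_i\lambda_i^{-1}$, deduce the pointwise inequality $f\le n\,w$ with equality exactly at umbilic points, note that by Theorem \ref{thm:Pog Pogrelov estimate} equality holds at a maximum of $w$ so that $\max f = n\max w$, and then sandwich to conclude that any maximum of $f$ is also a maximum of $w$, whence Theorem \ref{thm:Pog Pogrelov estimate} applies. The only cosmetic difference is that you introduce an auxiliary maximum point $q_1$ of $w$ and write out the chain of inequalities explicitly, whereas the paper states the same thing more tersely.
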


\begin{proof}
We observe $b^{ij}g_{ij}(p)=\sum^n_{i=1} \lambda_i^{-1}(p)$, where $\lambda_1(p),\cdots,\lambda_n(p)$ are the principal curvatures of $\Sigma$ at $F(p)$. Therefore, we have $f(p) \leq n \, w(p)$. However, if $w(p_0)=\max_{p\in M^n} w(p)$, then $f(p_0)=n\, w(p_0)$ holds, because $F(p_0)$ is an umbilical 
point by Theorem \ref{thm:Pog Pogrelov estimate}. Hence, we have 
\begin{align*}
f(p) \leq n\, w(p)\leq  \max_{p \in M^n}n\, w (p)= \max_{p \in M^n}f(p).
\end{align*}
Thus, if $f$ attains its maximum at a point $p_0$, then $w$ also attains its maximum at $p_0$, and thus we can obtain the desired result by Theorem \ref{thm:Pog Pogrelov estimate}.
\end{proof}

\bigskip

We will now employ the strong maximum principle to  prove Theorem \ref{thm:INT Uniqueness}.
 
\begin{proof}[Proof of Theorem \ref{thm:INT Uniqueness}]
We define a set $M_f \subset M^n$ by 
$$M_f=\{p\in M^n: f(p)=\max_{M^n}f\}.$$  Since $f(p)$ is a continuous function defined on a closed manifold $M^n$, $f$ attains its maximum, and thus $M_f$ is not an empty set. We now define the continuous function $\Lambda:M^n \to \mathbb{R}$ and the open set $V\subset M^n$ by
\begin{align*}
 \Lambda(p)=\sum_{i,j}\Big(\frac{\lambda_i}{\lambda_j}-\frac{\lambda_j}{\lambda_i}\Big)^2(p), \quad \qquad  V=\Big\{p \in M^n:\Lambda(p)< \Big(\frac{10}{9}-\frac{9}{10}\Big)^2 \Big\}.
\end{align*}

We now begin by combining \eqref{eq:Pre LK^a} and \eqref{eq:Pre Lb} to obtain
\begin{align*}
\cL \, \big (K^\alpha b^{pq} \big ) =&2\langle \nabla K^\alpha ,\nabla b^{pq} \rangle_{\cL}+ b^{pr}b^{qs}\nabla_r K^{\alpha} \nabla_s K^\alpha+\alpha K^{2\alpha} b^{pr}b^{qs}b^{ij}b^{km}\nabla_r h_{ik}\nabla_s h_{jm}\\
 &+\langle F,\nabla (K^\alpha b^{pq}) \rangle +(n\alpha -1)K^\alpha (b^{pq} -g^{pq} K^{\alpha}).
\end{align*}
Therefore, we can derive the following from \eqref{eq:Pre L|F|^2} and $\nabla g_{pq}=0$
\begin{align*}
\cL \, f=&2g_{pq}\langle \nabla K^\alpha ,\nabla b^{pq} \rangle_{\cL}+ b^{pr}b^{s}_p\nabla_r K^{\alpha} \nabla_s K^\alpha+\alpha K^{2\alpha} b^{pr}b^{s}_pb^{ij}b^{km}\nabla_r h_{ik}\nabla_s h_{jm}+\langle F,\nabla (K^\alpha b^{pq}g_{pq}) \rangle.
\end{align*}
By using \eqref{eq:Pog D|F|^2}, we can obtain
\begin{align*}
\langle F,\nabla (K^\alpha b^{pq}g_{pq}) \rangle=\langle F,\nabla f \rangle +\frac{n\alpha -1}{2\alpha}\langle F,\nabla |F|^2\rangle=\langle F,\nabla f \rangle +\big( n -\alpha^{-1} \big)\langle F, F_i\rangle\langle F,F^i\rangle.
\end{align*}
Hence, we have
\begin{align*}\label{eq:Smax General eq}
\cL \, f-\langle F,\nabla f \rangle =  &2\alpha ( b^{ij}\nabla_i K^\alpha) (K^\alpha g_{pq}\nabla_j b^{pq})+ b^{pr}b^{s}_p\nabla_r K^{\alpha} \nabla_s K^\alpha \tag{4.1}\\
&+\alpha K^{2\alpha} b^{pr}b^{s}_pb^{ij}b^{km}\nabla_r h_{ik}\nabla_s h_{jm}+\big( n -\alpha^{-1} \big)\langle F, F_i\rangle\langle F,F^i\rangle.
\end{align*}
Given a fixed point $p_0 \in V$, we choose an orthonormal frame at $F(p_0)$ satisfying \begin{align*}
g_{ij}(p_0)=\delta_{ij}, \qquad  \quad  h_{ij}(p_0)=\lambda_{i}(p_0)\delta_{ij}.
\end{align*}
Then, at the point $p_0$, we can rewrite \eqref{eq:Smax General eq} as 
\begin{align*}\label{eq:Smax General eq on orthonormal frame}
\cL \, f-\langle F,\nabla f \rangle =  &2\alpha \sum_{i,j}( b^{ii}\nabla_i K^\alpha) (K^\alpha \nabla_i b^{jj})+ \sum_{i}|b^{ii}\nabla_i K^{\alpha}|^2 \tag{4.2}\\
&+\alpha K^{2\alpha}\sum_{i,j,k} (b^{ii})^2b^{jj}b^{kk}|\nabla_i h_{jk}|^2+\big( n -\alpha^{-1} \big)\sum_i\langle F, F_i\rangle^2.
\end{align*}
Since $p_0 \in V$ and  the definition of $V$ guarantees that $b^{ii}h_{jj}(p_0) \geq \frac{9}{10}$,  
by using \eqref{eq:Pre Db} we can derive
\begin{align*}
\alpha K^{2\alpha}\sum_{i,j,k} (b^{ii})^2b^{jj}b^{kk}|\nabla_i h_{jk}|^2 \geq & \alpha\sum_i |K^\alpha\nabla_i b^{ii}|^2+2\alpha\sum_{i\neq j}b^{jj}h_{ii}|K^\alpha\nabla_j b^{ii}|^2+\alpha\sum_{i \neq j}(b^{ii}h_{jj})^2|K^\alpha\nabla_i b^{jj}|^2\\
\geq &\alpha\sum_i |K^\alpha\nabla_i b^{ii}|^2+\frac{5}{2} \alpha\sum_{i \neq j}|K^\alpha\nabla_i b^{jj}|^2.
\end{align*}
We also have
\begin{align*}
\alpha\sum_i |K^\alpha\nabla_i b^{ii}|^2 +2\alpha \sum_{i}( b^{ii}\nabla_i K^\alpha) (K^\alpha \nabla_i b^{ii})\geq &-\alpha\sum_{i}|b^{ii}\nabla_i K^{\alpha}|^2
\end{align*}
and
\begin{align*}
\frac{5}{2} \alpha\sum_{i \neq j}|K^\alpha\nabla_i b^{jj}|^2+2\alpha \sum_{i\neq j}( b^{ii}\nabla_i K^\alpha) (K^\alpha \nabla_i b^{jj}) \geq & -\frac{2}{5}\alpha\sum_{i\neq j}| b^{ii}\nabla_i K^\alpha|^2=-\frac{2}{5}\alpha(n-1)\sum_{i}| b^{ii}\nabla_i K^\alpha|^2.
\end{align*}
Applying the inequalities above and \eqref{eq:Prog bDK^a=D|F|^2} to \eqref{eq:Smax General eq on orthonormal frame} yields
\begin{align*}
\cL \, f-\langle F,\nabla f \rangle \geq &\Big((1-\alpha)-\frac{2}{5}\alpha(n-1)+(n-\alpha^{-1}) \Big)\sum_i\langle F, F_i\rangle^2\\
=&\frac{1}{5\alpha}\Big(-(2n+3)\alpha^2+5(n+1)\alpha-5 \Big)\sum_i\langle F, F_i\rangle^2.
\end{align*}
Let us consider the function $y(\alpha)=-(2n+3)\alpha^2+5(n+1)\alpha-5$. Then, we have
\begin{align*}
&y(1+1/n)=3n-2-(3/n)-(3/n^2) \geq 0 , && y(1/n)=(3/n)-(3/n^2) \geq 0,
\end{align*}
which implies $y(\alpha) \geq 0$ for $\alpha \in [\frac{1}{n},1+\frac{1}{n}]$. Therefore, on $V$ the following holds
\begin{align*}
\cL \, f-\langle F,\nabla f \rangle \geq 0.
\end{align*}
Notice that $\cL \, f-\langle F,\nabla f \rangle$ is a chart-independent function. Hence, the Hopf maximum principle and 
$M_f \subset V$ show that $M_f=V$. However, $M_f$ is a closed set and $V$ is an open set by the continuity  of $f$ and 
$\Lambda$, respectively. So, we conclude that  $M_f=M^n$, and thus Proposition \ref{prop:Smax Symmetric function} gives the desired result.
\end{proof}

\bibliographystyle{abbrv}

\bibliography{myref}

\centerline{\bf Acknowledgements}

\smallskip

\noindent P. Daskalopoulos and K. Choi have been partially supported by NSF grant DMS-1600658.

\end{document}